\numberwithin{equation}{section} %nummerer likninger etter seksjon
\numberwithin{figure}{section} %nummerer figurer etter seksjon
\title{Environmental contours and optimal design}
\begin{document}

\maketitle

\author{Kristina Rognlien Dahl \thanks{Department of Mathematics, University of Oslo, Norway (kristrd@math.uio.no). This work has received funding from the Norwegian Research Council SCROLLER project, project number 299897.}}

\author{Arne Bang Huseby \thanks{Department of Mathematics, University of Oslo, Norway (arne@math.uio.no)}}

\begin{abstract} 
Classical environmental contours are used in structural design in order to obtain upper bounds on the failure probabilities of a large class of designs.  Buffered environmental contours, first introduced in \cite{DahlHuseby}, serve the same purpose, but with respect to the so-called buffered failure probability.  In contrast to classical environmental contours, buffered environmental contours do not just take into account failure vs. functioning, but also to which extent the system is failing.  This is important to take into account whenever the consequences of failure are relevant.  For instance, if we consider a power network, it is important to know not just that the power supply is failed, but how many consumers are affected by the failure.  In this paper, we study the connections between environmental contours, both classical and buffered, and optimal structural design.  We connect the classical environmental contours to the risk measure value-at-risk.  Similarly, the buffered environmental contours are naturally connected to the convex risk measure conditional value-at-risk.  We study the problem of minimizing the risk of the cost of building a particular design.  This problem is studied both for value-at-risk and conditional-value-at-risk.  By using the connection between value-at-risk and the classical environmental contours, we derive a representation of the design optimization problem expressed via the environmental contour.  A similar representation is derived by using the connection between conditional value-at-risk and the buffered environmental contour.  From these representations, we derive a sufficient condition which must hold for an optimal design.  This is done both in the classical and the buffered case.  Finally, we apply these results to solve a design optimization problem from structural reliability.
\end{abstract}

\textbf{Key words:} Structural reliability analysis, environmental contour, structural design, failure probability, buffered failure probability, design optimization.

%%%%%%%%%%%%%%%%%%%%%%%%%

%%%%%%%%%%%%%%%%%%%%%%%%%
\def\mc#1{\mathcal #1}
\def\mb#1{\mathbb #1}
\def\bm#1{\mbox{\boldmath $#1$}}
\def\proof{{\parindent 0pt \textbf{Proof: }}}
\def\case#1{{\parindent 0pt \textsc{Case $#1$. }}}
\def\scenario#1{{\medskip\parindent 0pt \textsc{Scenario $#1$. }}}
\def\step#1{{\parindent 0pt \textsc{Step $#1$. }}}
\def\qed{\qquad \blacksquare}
\def\eqd{\stackrel{d}{=}}
\def\nsubseteq{{\not\subseteq}}
\def\I{\mathop{\rm \raisebox{0.1pt} I}}
\def\E{\mathop{\rm \raisebox{0.1pt} E}}
\def\Var{\mathop{\rm \raisebox{0.1pt} Var}}
\def\SD{\mathop{\rm \raisebox{0.1pt} SD}}
\def\Corr{\mathop{\rm \raisebox{0.1pt} Corr}}
\def\Cov{\mathop{\rm \raisebox{0.1pt} Cov}}
\def\IFF{\mbox{ if and only if }}
\def\AND{\mbox{ and }}
\def\FOR{\mbox{ for }}
\def\FORALL{\mbox{ for all }}
\def\Podist{\mathop{\rm Po}}
\def\Bindist{\mathop{\rm Bin}}
\def\Gamdist{\mathop{\rm Gamma}}
\def\Betadist{\mathop{\rm Beta}}
\def\Logdist{\mathop{\rm Lognorm}}
\def\imph{\emph}
\def\hangeq{\bigskip \hangindent=43pt}
\def\dg{^{\circ} {\rm C}}
\def\mt{{\rm m}}
\def\sp{{\,\,\,}}

\def\riscuegraphics#1{\includegraphics[width=6.8cm]{#1}}
\def\eqnref#1{(\ref{#1})}
\def\thmref#1{Theorem \ref{#1}}
\def\lemref#1{Lemma \ref{#1}}
\def\propref#1{Proposition \ref{#1}}
\def\corref#1{Corollary \ref{#1}}
\def\defnref#1{Definition \ref{#1}}
\def\exref#1{Example \ref{#1}}
\def\remref#1{Remark \ref{#1}}
\def\algref#1{Algorithm \ref{#1}}
\def\tblref#1{Table \ref{#1}}
\def\figref#1{Figure \ref{#1}}
\def\secref#1{Section \ref{#1}}
\def\subsecref#1{Subsection \ref{#1}}
\def\appref#1{Appendix \ref{#1}}

\def\captit#1{\caption{\textit{#1}}}

\newtheorem{theorem}{Theorem}[section]
\newtheorem{corollary}[theorem]{Corollary}
\newtheorem{definition}[theorem]{Definition}
\newtheorem{proposition}[theorem]{Proposition}
\newtheorem{lemma}[theorem]{Lemma}
\newtheorem{example}[theorem]{Example}
\newtheorem{remark}[theorem]{Remark}

\section{Introduction}

In this paper, we will consider the problem of design optimization. We will minimize the risk of the cost of a structural design. The cost of the structural design is composed of two parts: A fixed failure cost $K$ which occurs in case of system failure and a cost function $\kappa(\bm{x})$ which only depends on the chosen design $\bm{x}$. This risk-of-cost minimization will be done with respect to two different risk measures: Value-at-risk and conditional value-at-risk. Conditional value-at-risk is a convex risk measure, which takes into account not just whether a system functions or fails, but to which extent it fails. We connect the value-at-risk and conditional value-at-risk to environmental contours via functions $C(\bm{u})$ and $\bar{C}(\bm{u})$. This connection to the functions $C(\bm{u})$ and $\bar{C}(\bm{u})$ allows us to get an alternative characterisation of the risk-minimization problems.

The structure of the paper is as follows: In Section \ref{sec: var}, we recall the definition of value-at-risk and derive som properties of this risk measure. In Section \ref{sec: env_contour}, we recall the concept of environmental contours and buffered environmental contours. In Section \ref{sec: var_design}, we derive an alternative characterization of the design optimization problem of minimizing the value-at-risk of the cost of a structure by connecting this problem to environmental contours. In Section \ref{sec: example}, we apply this methodology to a structural design problem. Value-at-risk ignores the tail of the distribution of the structure function, therefore we recall the definition of another risk measure, conditional value-at-risk (CVaR), in Section \ref{sec: cvar}. We also derive some properties of CVaR. In Section \ref{sec: cvar_design}, we minimize the conditional value-at-risk of the cost of a structure. We derive an alternative characterization of this problem by connecting it to buffered environmental contours. Finally, in Section \ref{sec: u}, we discuss a criterion for selecting a set of initial design concepts related to the system of interest.

\section{Value-at-risk and some properties}
\label{sec: var}

Let $X$ be a random variable, representing risk. Define $S_X(x) := P(X > x)$. Let $\alpha \in (0,1)$ be a given probability representing an \emph{acceptable level of risk}. In the context of structural design, the value of $\alpha$ can for instance be determined by the firm based on the required return period of the system. See \cite{DahlHuseby} for further details. The $\alpha$-level \emph{value-at-risk} associated with the risk $X$, denoted by $V_{\alpha}[X]$, is given by $S^{-1}_X(\alpha)$. More formally, we define:
\begin{equation}
\label{eq:valueAtRisk}
V_{\alpha}[X] = S^{-1}_X(\alpha) = \inf\{x : P(X > x) \leq \alpha\}.
\end{equation}

value-at-risk is frequently used for risk management in banks and insurance companies. For more about value-at-risk as a risk measure, see e.g. \cite{jorion2000value} and \cite{best1998implementing}.

In the special case where $X$ is absolutely continuously distributed, we have:
$$
\begin{array}{lll}
V_{\alpha}[X] = S^{-1}_X(\alpha) = x \mbox{ }\IFF \\[\smallskipamount]
\hspace{3.5cm} P(X > x) = \alpha.
\end{array}
$$

More generally, if $S_X$ is strictly decreasing, we have that:
\begin{equation}
\label{eq:altValueAtRiskDecreasing}
\begin{array}{lll}
V_{\alpha}[X] = x \IFF \\[\smallskipamount]
\hspace{2cm} P(X > x) \leq \alpha \leq P(X \geq x).
\end{array}
\end{equation}

Finally, if $X$ is a \emph{discrete random variable}, we have that:
\begin{equation}
\label{eq:altValueAtRiskDiscrete}
\begin{array}{lll}
V_{\alpha}[X] = x \IFF \\[\smallskipamount]
\hspace{2cm} P(X > x) \leq \alpha < P(X \geq x).
\end{array}
\end{equation}

We now show some properties of value-at-risk which are needed to derive an alternative characterization of our design optimization problem in Section \ref{sec: var_design}.

\begin{theorem}[Monotone transform]
\label{prop: monotone}
For any strictly increasing continuous function $\phi: \mb{R}$ $\rightarrow$ $\mb{R}$ we have:
\begin{equation}
\label{eq:monotoneTransformProperty}
V_{\alpha}[\phi(X)] = S^{-1}_{\phi(X)}(\alpha) = \phi(S^{-1}_X(\alpha))
\end{equation}
\end{theorem}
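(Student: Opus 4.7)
The plan is to work directly from the infimum definition \eqref{eq:valueAtRisk} applied to $Y := \phi(X)$, reduce the event $\{\phi(X) > y\}$ to an event about $X$, and then use continuity and strict monotonicity of $\phi$ to move the resulting infimum from $y$-values to $x$-values.

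\textbf{Step 1: Rewrite the survival function of $\phi(X)$.} Since $\phi$ is continuous and strictly increasing on $\mathbb{R}$, its image $J := \phi(\mathbb{R})$ is an open interval $(a,b)$, with $a = \lim_{x\to-\infty}\phi(x) \in [-\infty,\infty)$ and $b = \lim_{x\to\infty}\phi(x) \in (-\infty,\infty]$, and $\phi$ restricted to $\mathbb{R}$ is a homeomorphism onto $J$ with continuous, strictly increasing inverse $\phi^{-1}:J\to\mathbb{R}$. For $y \in J$ one has $\{\phi(X) > y\} = \{X > \phi^{-1}(y)\}$, so
\[
S_{\phi(X)}(y) = P(\phi(X) > y) = P(X > \phi^{-1}(y)) = S_X(\phi^{-1}(y)).
\]
For $y \ge b$ we have $S_{\phi(X)}(y) = 0 \le \alpha$, and for $y \le a$ we have $S_{\phi(X)}(y) = 1 > \alpha$, so the set in the definition of $V_\alpha[\phi(X)]$ is contained in $(a,\infty)$ and contains $[b,\infty)$.

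\textbf{Step 2: Change of variables under the infimum.} Combining the three cases,
\[
V_\alpha[\phi(X)] \;=\; \inf\bigl\{\,y \in (a,b) \,:\, S_X(\phi^{-1}(y)) \le \alpha\,\bigr\},
\]
with the convention that this equals $b$ if the set inside $(a,b)$ is empty. Setting $x := \phi^{-1}(y)$ — a strictly increasing, continuous bijection between $(a,b)$ and $\mathbb{R}$ — the set on the right becomes $\{\phi(x) : x \in \mathbb{R},\ S_X(x) \le \alpha\}$, so
\[
V_\alpha[\phi(X)] \;=\; \inf\bigl\{\phi(x) : x \in \mathbb{R},\ S_X(x) \le \alpha\bigr\}.
\]

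\textbf{Step 3: Pull $\phi$ outside the infimum.} Because $\phi$ is strictly increasing and continuous, it commutes with infima taken over nonempty subsets $A \subseteq \mathbb{R}$: if $x^\ast := \inf A$, then $\phi(x^\ast) \le \phi(x)$ for every $x \in A$, and by continuity $\phi(x_n) \to \phi(x^\ast)$ for any sequence $x_n \in A$ with $x_n \to x^\ast$, giving $\phi(\inf A) = \inf \phi(A)$. Applying this with $A = \{x : S_X(x) \le \alpha\}$ yields
\[
V_\alpha[\phi(X)] \;=\; \phi\bigl(\inf\{x : S_X(x) \le \alpha\}\bigr) \;=\; \phi\bigl(S_X^{-1}(\alpha)\bigr) \;=\; \phi\bigl(V_\alpha[X]\bigr),
\]
which is \eqref{eq:monotoneTransformProperty}.

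The main obstacle I anticipate is not any deep computation but the bookkeeping in Step 1: one must verify that the edge cases $y\le a$ and $y\ge b$ (when $\phi$ is not surjective onto $\mathbb{R}$) do not spoil the identification of the infimum — in particular that the infimum does not accidentally get pulled down to $a$. This is handled by the observation $S_{\phi(X)}(y)=1>\alpha$ for $y\le a$, which ensures such $y$ are excluded from the defining set, together with continuity of $\phi$ at $V_\alpha[X]$ which is what makes the final $\phi \circ \inf = \inf \circ \phi$ step legitimate even when the infimum is not attained.
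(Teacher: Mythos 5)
Your proof is correct and follows essentially the same route as the paper's: rewrite $P(\phi(X)>y)$ as $P(X>\phi^{-1}(y))$, substitute $y=\phi(x)$ inside the infimum, and pull $\phi$ out. The only difference is that you explicitly justify the two steps the paper takes for granted --- the edge cases when $\phi$ is not surjective onto $\mathbb{R}$, and the identity $\phi(\inf A)=\inf\phi(A)$ via continuity --- which is a welcome tightening rather than a different argument.
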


\begin{proof}
We note that since $\phi$ is strictly increasing, it follows by (\ref{eq:valueAtRisk}) that:
\begin{align}
V_{\alpha}[\phi(X)] &= \inf\{y : P(\phi(X) > y) \leq \alpha\} \\
&= \inf\{y : P(X > \phi^{-1}(y)) \leq \alpha\}.
\end{align}
We then substitute $y = \phi(x)$ and $\phi^{-1}(y) = x$, and get:
\begin{align}
V_{\alpha}[\phi(X)] &= \inf\{\phi(x) : P(X > x) \leq \alpha\} \\
&= \phi(\inf\{x : P(X > x) \leq \alpha\}) \\
&= \phi(S^{-1}_X(\alpha)).
\end{align}

\hspace{10cm} $\square$
\end{proof}
\smallskip

Value-at-risk is linear, as shown in the following result.

\begin{corollary}[Linearity]
For $a > 0$ and $b \in \mathbb{R}$ we have:
$$
V_{\alpha}[a X + b] = a V_{\alpha}[X] + b.
$$ 
\end{corollary}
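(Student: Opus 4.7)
The plan is to obtain this as an immediate consequence of the Monotone Transform theorem (Theorem \ref{prop: monotone}). Define $\phi : \mathbb{R} \to \mathbb{R}$ by $\phi(x) = ax + b$. Since $a > 0$, the function $\phi$ is strictly increasing, and it is clearly continuous, so it satisfies the hypotheses of Theorem \ref{prop: monotone}.

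I would then simply invoke the theorem with this choice of $\phi$, which gives
\[
V_{\alpha}[aX + b] = V_{\alpha}[\phi(X)] = \phi(S_X^{-1}(\alpha)) = a\,S_X^{-1}(\alpha) + b = a\,V_{\alpha}[X] + b,
\]
where the last equality uses the definition $V_{\alpha}[X] = S_X^{-1}(\alpha)$ from \eqref{eq:valueAtRisk}.

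Because the work has already been done in the theorem, there is essentially no obstacle: the only thing to verify is that $\phi$ meets the hypotheses, and the strict inequality $a > 0$ is precisely what secures strict monotonicity. Note that the statement genuinely requires $a > 0$ rather than $a \neq 0$, since for $a < 0$ the map $\phi$ is strictly \emph{decreasing} and Theorem \ref{prop: monotone} does not apply directly; in that case one would instead obtain a relation involving the lower quantile $1 - \alpha$.
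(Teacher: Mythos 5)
Your proof is correct and follows exactly the same route as the paper: both apply the Monotone Transform theorem (Theorem \ref{prop: monotone}) to $\phi(x) = ax + b$, using $a > 0$ to guarantee strict monotonicity. Your additional remark explaining why $a > 0$ (rather than $a \neq 0$) is genuinely needed is a nice touch but does not change the substance.
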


\begin{proof} 
The result follows directly from the monotonicity property by noting that:
$$
\phi(X) = a X + b
$$
is a strictly increasing function for all $a > 0$ and $b \in \mathbb{R}$.

\hspace{10cm} $\square$
\end{proof}

\section{Environmental contours}\footnote{This section is based on \cite{DahlHuseby}. We include it here for the sake of completeness.}
\label{sec: env_contour}

The classical approach to environmental contours was first introduced in \cite{Rosenblatt52}. A Monte Carlo approach to environmental contours was considered in \cite{HusebyVN-EnvCont-OE2013}, \cite{HusebyVN-EnvCont-SS2015} and \cite{HusebyVN-EnvCont-ESREL2014}.

In probabilistic structural design, it is common to define a \emph{performance function} $g(\bm{x}, \bm{V})$ depending on some deterministic design variables $\bm{x} = (x_1, x_2, \ldots, x_m)'$ representing various parameters such as capacity, thickness, strength etc. and some random \emph{environmental quantities} $\bm{V} = (V_1, V_2, \ldots, V_n)' \in \mc{V}$, where $\mc{V} \subseteq \mb{R}^n$.  The performance function is defined such that if $g(\bm{x}, \bm{V}) > 0$, the structure is \emph{failed}, while if $g(\bm{x}, \bm{V}) \leq 0$, the structure is \emph{functioning}. Moreover, for a given $\bm{x}$ the set $\mc{F} = \{\bm{v} \in \mc{V} : g(\bm{x}, \bm{V})> 0\}$ is called the \emph{failure region} of the structure.
%Environmental contours are typically used during the early design phases where the exact shape of the failure region is typically \emph{unknown}. At this stage it it may not be possible to express a precise functional relationship between a set of design variables $\bm{x}$ and the performance of the structure. Instead we skip $\bm{x}$ in the notation and let the design options be embedded in the performance function $g(\bm{V})$ itself. In particular we denote the failure region simply by $\mc{F}$, while the corresponding failure probability, $P(\bm{V} \in \mc{F})$, is denoted by $p_f(\mc{F})$. Although $\mc{F}$ is unknown, it may still be possible to argue that $\mc{F}$ belongs to some known family, $\mc{E}$, of failure regions. We consider cases where the environmental conditions can be described by a stochastic vector $\bm{V} \in \mb{R}^n$ with a known distribution. 
An important part of the probabilistic design process is to make sure that $P(\bm{V} \in \mc{F})$ is acceptable for all failure regions $\mc{F}$ of interest, denoted  $\mc{E}$. 

In order to avoid failure regions with unacceptable probabilities, it is necessary to put some restrictions on the family of failure regions. This is done by introducing a set $\mc{B} \subseteq \mb{R}^n$ chosen so that for any relevant failure region $\mc{F}$ which do not overlap with $\mc{B}$, the failure probability $P(\bm{V} \in \mc{F})$ is \emph{small}. The family $\mc{E}$ is chosen relative to $\mc{B}$ so that $\mc{F} \cap \mc{B} \subseteq \partial\mc{B}$ for all $\mc{F} \in \mc{E}$, where $\partial\mc{B}$ denotes the boundary of $\mc{B}$. This boundary is then referred to as an \emph{environmental contour}. See \figref{fig:env_contour}.

\begin{figure}[h!]
\begin{center}
\includegraphics[width=0.5\textwidth]{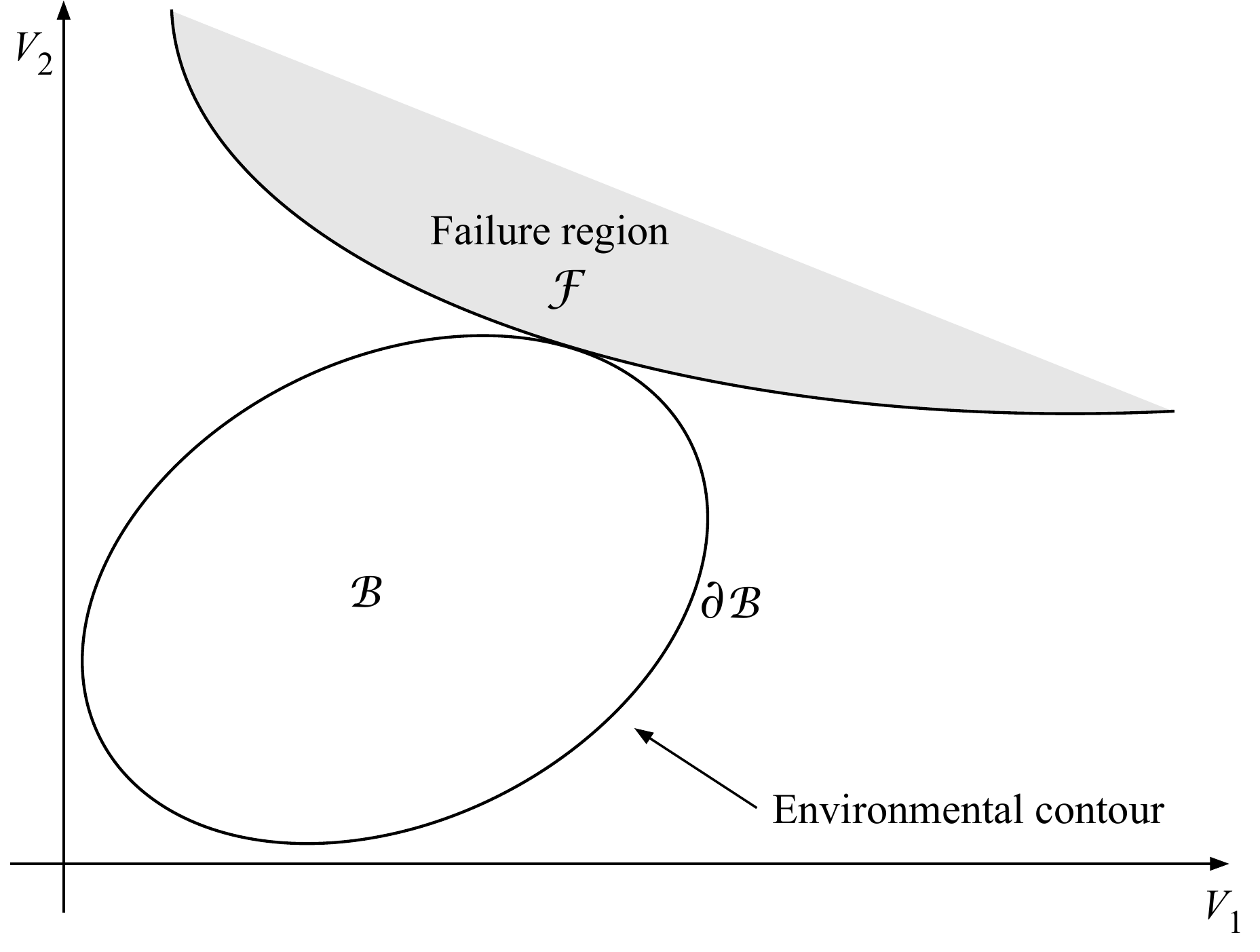}
\end{center}
\caption{An environmental contour $\partial \mc{B}$ and a failure region $\mc{F}$.}
\label{fig:env_contour}
\end{figure}

Following \cite{HusebyVE-EnvCont-ESREL2017} we define the \emph{exceedence probability} of $\mc{B}$ with respect to $\mc{E}$ as:
\begin{equation}
\label{eq:exceedenceProb}
P_{e}(\mc{B}, \mc{E}) := \sup \{p_f(\mc{F}) : \mc{F} \in \mc{E}\}.
\end{equation}
For a given \emph{target probability} $\alpha$ the objective is to choose an environmental contour $\partial\mc{B}$ such that:
$$
P_{e}(\mc{B}, \mc{E}) = \alpha
$$

The exceedence probability represents an upper bound on the failure probability of the structure assuming that the true failure region is a member of the family $\mc{E}$. Of particular interest are cases where one can argue that the failure region of a structure is \emph{convex}. That is, cases where $\mc{E}$ is the class of all convex sets which do not intersect with the interior of $\mc{B}$. 

\subsection{Monte Carlo contours}

There are many possible ways of constructing environmental contours. In this paper we connect the design optimization problem to the \emph{Monte Carlo based approach to environmental contours}, first introduced in \cite{HusebyVN-EnvCont-OE2013}, and improved in \cite{HusebyVN-EnvCont-SS2015} and \cite{HusebyVN-EnvCont-ESREL2014}.

Let $\mc{U}$ be the set of all unit vectors in $\mb{R}^n$, and let $\bm{u} \in \mc{U}$. We then introduce a function $C(\bm{u})$ defined for all $\bm{u} \in \mc{U}$ as: 
\begin{equation}
\label{eq:Cfunction}
C(\bm{u}) := \inf\{C : P(\bm{u}'\bm{V} > C) \leq \alpha\}
\end{equation}
Thus, $C(\bm{u})$ is the $(1-\alpha)$-quantile of the distribution of $\bm{u}'\bm{V}$. Given the distribution of $\bm{V}$, the function $C(\bm{u})$  can be estimated by using Monte Carlo simulation, see e.g. \cite{DahlHuseby}.

Then, from the previous definitions, 
\[
P[\bm{u}' \bm{V} > C(\bm{u})] = \alpha.
\]
We will use this equality to connect the optimal design problem to environmental contours, via the quantile function $C(\bm{u})$.

\subsection{Buffered environmental contours}

Similarly, so called \emph{buffered environmental contours}, first introduced in \cite{DahlHuseby}, can be estimated via a function 

\begin{equation}
\label{eq:bufCfunction}
\bar{C}(\bm{u}) := \E[\bm{u}' \bm{V} | \bm{u}' \bm{V} > C(\bm{u})].
\end{equation}

Buffered environmental contours are constructed similarly to classical environmental contours, with the exception that the failure probability of interest is the \emph{buffered failure probability}. For any probability level $\alpha$, the $\alpha$-\emph{superquantile} of $g(\bm{x},\bm{V})$, $\bar{q}_{\alpha}(\bm{x})$, is defined as:
\begin{equation}
\label{eq: superquantile}
\bar{q}_{\alpha}(\bm{x}) =  E[g(\bm{x}, \bm{V}) | g(\bm{x}, \bm{V}) > q_{\alpha}(\bm{x})].
\end{equation}
That is, the $\alpha$-superquantile is the conditional expectation of $g(\bm{x}, \bm{V})$ when we know that its value is greater than or equal the $\alpha$-quantile. Then, the buffered failure probability, $\bar{p}_f$, first introduced by Rockafellar and Royset \cite{RockafellarRoyset}, is defined as 
\begin{equation}
\label{eq: buffered_failure_prob}
\bar{p}_f(\bm{x}) = 1-\alpha,
\end{equation}
\noindent where $\alpha$ is chosen so that $\bar{q}_{\alpha}(\bm{x}) = 0$

From these definitions, it follows that 
\begin{equation}
\label{eq: alt_buffered_failure_prob}
\bar{p}_f(\bm{x})= P(g(\bm{x}, \bm{V}) > q_{\alpha}(\bm{x})) = 1-F(q_{\alpha}(\bm{x}))
\end{equation}
\noindent where $F$ denotes the distribution of the structure function $g$. Buffered environmental contours can be constructed via Monte Carlo similarly as classical contours. We will connect the design optimization problem wrt. conditional value-at-risk to buffered environmental contours in Section \ref{sec: cvar_design}.

\section{Value-at-risk and optimal design}
\label{sec: var_design}

We will now connect the optimal design problem with respect to value-at-risk to the quantile function $C(\bold{u})$. Then, we use this connection to derive an alternative characterization of the optimization problem. Some key references on design optimization and structural design are \cite{parkinson1993general} and \cite{cruse1997reliability}.

Let $\bm{V} = (V_1, \ldots, V_n) \in \mathcal{V}$ be a vector of environmental variables and let $\alpha \in (0,1)$ be a given probability representing an acceptable level of risk. We assume that we have determined a function $C(\bm{u})$ defined for all unit vectors $\bm{u} \in \mathbb{R}^n$ such that:
\begin{equation}
\label{eq:cfunction_1}
P[\bm{u}' \bm{V} > C(\bm{u})] = \alpha, \mbox{ for all } \bm{u} \in \mathbb{R}^n.
\end{equation}

We also introduce the following notation:
\begin{align*}
\Pi(\bm{u}) &= \{\bm{V} \in \mathcal{V} : \bm{u}' \bm{V} = C(\bm{u})\}, \\[2mm]
\Pi^+(\bm{u}) &= \{\bm{V} \in \mathcal{V} : \bm{u}' \bm{V} > C(\bm{u})\}, \\[2mm]
\Pi^-(\bm{u}) &= \{\bm{V} \in \mathcal{V} : \bm{u}' \bm{V} \leq C(\bm{u})\}
\end{align*}

Hence, we have:
\begin{equation}
\label{eq:cfunction_2}
\begin{array}{llll}
P[\bm{V} \in \Pi^+(\bm{u})] = P[\bm{u}' \bm{V} > C(\bm{u})] = \alpha, \\[\smallskipamount]
\hspace{4cm} \mbox{ for all } \bm{u} \in \mathbb{R}^n.
\end{array}
\end{equation}

\begin{remark}[Connection to MC contours]
Note that this is the same framework as what is frequently used in connection to Monte Carlo environmental contours, see Section \ref{sec: env_contour} as well as \cite{DahlHuseby}. The function $C$ corresponds to the quantile function used to construct environmental contours, see \eqref{eq:Cfunction}.
\end{remark}

%Next we let $\bm{x} = (x_1, \ldots, x_m)$ be a vector of design variables for a given system representing various parameters such as capacity, thickness, strength etc. Every design is referred to simply by its corresponding vector of design variables, i.e., $\bm{x}$. The set of possible designs is denoted by $\mathcal{X}$. The \emph{performance function} of a system is denoted by $g$, and is assumed to be a function of both $\bm{V}$ and $\bm{x}$:
%$$
%g = g(\bm{V}, \bm{x}).
%$$
%The performance function is used to identify environmental conditions where the system fails. More specifically, the system fails if and only if $g(\bm{V}, \bm{x}) > 0$.

Let the \emph{cost of system failure} be denoted by $K$. We introduce a deterministic function $\kappa = \kappa(\bm{x})$ representing the \emph{cost of the design} $\bm{x}$, and assume that:
$$ 
\kappa(\bm{x}) < K \mbox{ for all } \bm{x} \in \mathcal{X}.
$$
Note that this assumption implies that for any design of interest, system failure costs more than rebuilding the system. This means that system failure has other financial consequences than just having to rebuild the system. This will typically be the case in practise, for instance for telecommunication networks, subway networks or power production companies.

The \emph{total cost}, denoted $H$, is given by:
$$
H(\bm{V}, \bm{x}) = K \cdot \I[g(\bm{V}, \bm{x}) > 0] + \kappa(\bm{x}).
$$
\noindent where $\I[\cdot]$ denotes the indicator function. The $\alpha$-level value-at-risk of a given design, denoted $V_{\alpha}(H)$, is given by:
$$
V_{\alpha}(H) = S_{H}^{-1}(\alpha),
$$
where $S_H(h) = 1 - F_H(h) = P(H > h)$. Thus, $V_{\alpha}(H)$ is the $(1 - \alpha)$-percentile of the distribution of $H$.

Our \emph{main objective} is to choose a design $\bm{x}$ so that to minimize the value-at-risk of $H$, i.e.
$$
\min_{\bm{x} \in \mc{X}} V_{\alpha}\big(H(\bm{V}, \bm{x})\big)
$$
Since $\kappa(\bm{x})$ is deterministic, it follows by the linearity of $V_{\alpha}$ that:
$$
V_{\alpha}[H] = V_{\alpha}[K \cdot \I[g(\bm{V}, \bm{x}) > 0]] + \kappa(\bm{x}).
$$

We observe that $K \cdot \I[g(\bm{V}, \bm{x}) > 0]$ is a discrete random variable with only two possible values, $0$ and $K$. Its distribution is given by:
\begin{align*}
P[K \cdot \I[g(\bm{V}, \bm{x}) > 0] = K] &= P[g(\bm{V}, \bm{x}) > 0], \\[2mm]
P[K \cdot \I[g(\bm{V}, \bm{x}) > 0] = 0] &= P[g(\bm{V}, \bm{x}) \leq 0].
\end{align*}

\noindent By \eqref{eq:altValueAtRiskDiscrete} we know that:
$$
V_{\alpha}[K \cdot \I[g(\bm{V}, \bm{x}) > 0]] = y,
$$
if and only if:
\[
\begin{array}{lll}
P[K \cdot \I[g(\bm{V}, \bm{x}) > 0] > y] \leq \alpha \\[\smallskipamount]
\hspace{2cm} < P[K \cdot \I[g(\bm{V}, \bm{x}) > 0] \geq y]
\end{array}
\]

\noindent In particular, we have 
\[
P[K \cdot \I[g(\bm{V}, \bm{x}) > 0] > K] = 0 < \alpha.
\]
\noindent This implies that:
$$
V_{\alpha}[K \cdot \I[g(\bm{V}, \bm{x}) > 0]] = K,
$$
if and only if:
$$
P[K \cdot \I[g(\bm{V}, \bm{x}) > 0] \geq K] = P[g(\bm{V}, \bm{x}) > 0] > \alpha
$$

\noindent Furthermore, we have 
\[
P[K \cdot \I[g(\bm{V}, \bm{x}) > 0] \geq 0] = 1 > \alpha.
\]
\noindent This implies that:

$$
V_{\alpha}[K \cdot \I[g(\bm{V}, \bm{x}) > 0]] = 0,
$$
\noindent if and only if:
$$
\begin{array}{llll}
P[K \cdot \I[g(\bm{V}, \bm{x}) > 0] > 0] &=& P[g(\bm{V}, \bm{x}) > 0] \\[\smallskipamount]
&\leq& \alpha.
\end{array}
$$

\noindent Summarizing this, we get:
\begin{equation}
\label{eq: box}
\begin{array}{llll}
V_{\alpha}(K \cdot \I[g(\bm{V}, \bm{x}) > 0]) \\[\smallskipamount]
\hspace{2cm}= \begin{cases}
        K &\mbox{ if } P[g(\bm{V}, \bm{x}) > 0] > \alpha \\
       	0 &\mbox{ if } P[g(\bm{V}, \bm{x}) > 0] \leq \alpha
     \end{cases}
\end{array}
\end{equation}
\noindent From this it follows that:
\begin{equation}
\label{eq: star}
V_{\alpha}(H) = \begin{cases}
K + \kappa(\bm{x}) &\mbox{ if } P[g(\bm{V}, \bm{x}) > 0] > \alpha \\
\kappa(\bm{x}) &\mbox{ if } P[g(\bm{V}, \bm{x}) > 0] \leq \alpha
\end{cases}
\end{equation}
Since we have assumed that $\kappa(\bm{x}) < K$ for all $\bm{x} \in \mathcal{X}$, it follows that an optimal design $\bm{x}$ must be chosen so that:
\begin{equation}
\label{eq:riskcondition}
P[g(\bm{V}, \bm{x}) > 0] \leq \alpha
\end{equation}

\begin{theorem}[Halfspace condition VaR]
\label{thm:halfspace_condition}
A sufficient condition for \eqref{eq:riskcondition}	to hold is that $g(\bm{V}, \bm{x}) \leq 0$ for all $\bm{V}$ such that $\bm{u}'\bm{V} \leq C(\bm{u})$, where $\bm{u} \in \mathbb{R}^n$ is a suitably chosen unit vector.
\end{theorem}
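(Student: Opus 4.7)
My plan is to prove the theorem by a direct set-inclusion and monotonicity-of-probability argument; no machinery beyond the defining property of $C(\bm{u})$ is needed. The hypothesis says there exists a unit vector $\bm{u}$ such that
\[
\bm{u}'\bm{V} \leq C(\bm{u}) \implies g(\bm{V}, \bm{x}) \leq 0.
\]
Taking the contrapositive, $g(\bm{V}, \bm{x}) > 0$ forces $\bm{u}'\bm{V} > C(\bm{u})$, so the failure event $\{g(\bm{V}, \bm{x}) > 0\}$ is contained in the open halfspace $\Pi^{+}(\bm{u}) = \{\bm{V} : \bm{u}'\bm{V} > C(\bm{u})\}$ introduced earlier.

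Next, by monotonicity of probability and the defining identity \eqref{eq:cfunction_2} for the quantile function,
\[
P[g(\bm{V}, \bm{x}) > 0] \;\leq\; P[\bm{V} \in \Pi^{+}(\bm{u})] \;=\; P[\bm{u}'\bm{V} > C(\bm{u})] \;=\; \alpha,
\]
which is precisely condition \eqref{eq:riskcondition}. Geometrically, the hypothesis forces the failure region $\mathcal{F} = \{\bm{v} \in \mathcal{V} : g(\bm{v}, \bm{x}) > 0\}$ to lie entirely on one side of the hyperplane $\Pi(\bm{u})$ tangent to the environmental contour in the direction $\bm{u}$, i.e.\ the condition is a halfspace-type restriction on $\mathcal{F}$ relative to the contour of Section \ref{sec: env_contour}.

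There is essentially no obstacle: the argument is a one-line implication chain once the contrapositive is applied. The only piece of bookkeeping to watch is the strict versus nonstrict inequality, namely that the contrapositive correctly produces the \emph{strict} inequality $\bm{u}'\bm{V} > C(\bm{u})$ so that the failure event sits inside $\Pi^{+}(\bm{u})$ as defined (rather than its closure), allowing the probability identity $P[\bm{u}'\bm{V} > C(\bm{u})] = \alpha$ from \eqref{eq:cfunction_2} to be applied directly.
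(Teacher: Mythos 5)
Your proof is correct and follows exactly the same route as the paper's: take the contrapositive to get the inclusion $\{g(\bm{V},\bm{x})>0\}\subseteq\Pi^{+}(\bm{u})$, then apply monotonicity of probability together with the identity $P[\bm{u}'\bm{V}>C(\bm{u})]=\alpha$ from \eqref{eq:cfunction_1}--\eqref{eq:cfunction_2}. Your added remark about the strict inequality and the geometric interpretation is a nice touch but does not change the substance of the argument.
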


\begin{proof} 
The condition implies that if $g(\bm{V}, \bm{x}) > 0$, then $\bm{u}'\bm{V} > C(\bm{u})$. Hence, by \eqref{eq:cfunction_1} we get that:
$$
P[g(\bm{V}, \bm{x}) > 0] \leq P[\bm{u}'\bm{V} > C(\bm{u})] = \alpha.
$$
Therefore, we conclude that inequality \eqref{eq:riskcondition} is satisfied. 

\hspace{10cm} $\square$
\end{proof}

We then let $\bm{u} \in \mathbb{R}^n$ be a unit vector and consider the following subclass of designs:
$$
\begin{array}{lll}
\mathcal{X}(\bm{u}) = \\[\smallskipamount]
\hspace{0.5cm} \{\bm{x} \in \mathcal{X} : g(\bm{V}, \bm{x}) \leq 0 \FORALL \bm{V} \in \Pi^-(\bm{u})\}.
\end{array}
$$
\noindent i.e., designs such that the systems functions for all $\bm{V} \in \Pi^-(\bm{u})$. By the halfspace condition, Theorem \ref{thm:halfspace_condition}, we know that condition \eqref{eq:riskcondition} is satisfied for all designs $\bm{x} \in \mathcal{X}(\bm{u})$. Hence, an optimal design within the subclass $\mathcal{X}(\bm{u})$ can be found by minimising $\kappa(\bm{x})$ with respect to $\bm{x} \in \mathcal{X}(\bm{u})$. Different choices of the unit vector $\bm{u}$ will generate different optimal designs. However, the choice of $\bm{u}$ may often be a result of initial concept decisions related to the system of interest. Thus, it may not be necessary to consider multiple subclasses of design.

\section{Example: Structural reliability}
\label{sec: example}

We consider a system whose performance depends on the non-negative environmental variables, $\bm{V} = (V_1, \ldots, V_n) \in \mathcal{V}$. The system fails if:
$$
A \bm{V} > \bm{x}
$$
where $A = A^{m \times n}$ is a matrix, and the design $\bm{x} = (x_1, \ldots, x_m)$ is a vector of \emph{strengths}.

The cost of the design $\bm{x}$ is given by:
$$
\kappa(\bm{x}) = c_1 x_1 + \cdots + c_m x_m.
$$

We want to minimize $\kappa(\bm{x})$ subject to 
\[
P[A \bm{V} > \bm{x}] \leq \alpha.
\] 

Since this failure probability may be difficult to compute, we instead minimize $\kappa(\bm{x})$ subject to:
\begin{equation}
\label{eq:optimalDesign}
\{\bm{V} \in \mathcal{V} : A \bm{V} > \bm{x}\} \subseteq \{\bm{V} \in \mathcal{V} : \bm{u}' \bm{V} > C(\bm{u})\}.
\end{equation}

It follows that if the design $\bm{x}$ satisfies \eqref{eq:optimalDesign}, then:
$$
P[A \bm{V} > \bm{x}] \leq P[ \bm{u}' \bm{V} > C(\bm{u})] = \alpha.
$$

For a given design $\bm{x}$, we can then check if it satisfies condition \eqref{eq:optimalDesign} by solving the following LP-problem:

\begin{equation}
\label{lp_problem}
\mbox{Minimise $\bm{u}' \bm{V}$ subject to $A \bm{V} \geq \bm{x}$.}
\end{equation}

Let $\bm{V}_0$ denote the solution to the minimization problem \eqref{lp_problem}. Then $\bm{x}$ satisfies condition \eqref{eq:optimalDesign} if and only if:
$$
\bm{u}' \bm{V}_0 > C(\bm{u}).
$$
By using a suitable iteration method one can then find a design $\bm{x}$ which minimizes $\kappa(\bm{x})$ subject to condition \eqref{eq:optimalDesign}.

\section{Conditional value-at-risk and some properties}
\label{sec: cvar}

So far, we have used value-at-risk as a design criterion. The problem with this is that VaR ignores the size of the outcomes in the tail of the distribution.

\begin{example}[Value-at-risk ignores the tail]

$\rm{VaR}_{0.05}(X)$ is the $x-$value such that only $5\%$ of the outcomes of $X$ are larger (i.e., worse in our context) than this value. Hence, $\rm{VaR}_{0.05}(X)$ ignores the size, and hence the consequences, of all values above this level.

\end{example}

Based on the previous definition of value-at-risk, conditional value-at-risk (CVaR),  denoted by $C_{\alpha}$, is defined as

\begin{equation}
\label{eq: CVaR}
C_{\alpha} (X) := \frac{1}{\alpha} \int_0^{\alpha} V_{u} (X) du
\end{equation}

That is, we compute the average of the value-at-risk in the $\alpha \%$ worst cases. Coherent risk measures, which conditional value-at-risk is an example of, were first introduced in \cite{artzner1999coherent}. CVaR is frequently used in mathematical finance, and to some extent in insurance mathematics. \cite{rockafellar2000optimization} and \cite{uryasev2000conditional} study optimization techniques in connection to CVaR.

Note that CVaR is also a convex risk measure, i.e.

\begin{enumerate}
 \item[$(i)$] (Convexity) For $0 \leq \lambda \leq 1$, $$C_{\alpha}(\lambda X + (1 - \lambda) Y) \leq \lambda C_{\alpha} (X) + (1 - \lambda) C_{\alpha} (Y).$$
 \item[$(ii)$] (Monotonicity) If $X \geq Y$, then \\
 $$C_{\alpha}(X) \geq C_{\alpha}(Y).$$
 \item[$(iii)$] (Translation invariance) If $m \in \mathbb{R}$, then $$C_{\alpha}(X + m \mbox{\bf{1}}) = C_{\alpha}(X) - m$$.
 \end{enumerate}
 
\begin{remark}
The monotonicity property is the other way around from what is common in financial mathematics because we view large positive values as bad (failure of system). In finance, greatly negative values are bad (losses). 
\end{remark}
 
\begin{theorem}
For any strictly increasing continuous function $\phi: \mb{R} \rightarrow \mb{R}$ we have:
\begin{equation}
\label{eq:monotoneTransformProperty_CVaR}
C_{\alpha}[\phi(X)] = \frac{1}{\alpha} \int_0^{\alpha} \phi(V_u(X)) du.
\end{equation}
\end{theorem}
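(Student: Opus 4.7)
The plan is to unravel the definition of CVaR and then apply the monotone transform property of value-at-risk (Theorem \ref{prop: monotone}) pointwise under the integral sign. This should be essentially a one-line argument once the ingredients are in place, so I expect no serious obstacle.

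First, I would apply the defining formula \eqref{eq: CVaR} with $\phi(X)$ in place of $X$, giving
\begin{equation*}
C_{\alpha}[\phi(X)] = \frac{1}{\alpha}\int_0^{\alpha} V_u[\phi(X)]\, du.
\end{equation*}
Next, since $\phi$ is strictly increasing and continuous, Theorem \ref{prop: monotone} (applied at each probability level $u \in (0,\alpha)$ rather than at the fixed level $\alpha$) yields the identity $V_u[\phi(X)] = \phi(V_u[X])$. Substituting this into the integrand gives the desired formula
\begin{equation*}
C_{\alpha}[\phi(X)] = \frac{1}{\alpha}\int_0^{\alpha} \phi(V_u[X])\, du.
\end{equation*}

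The only mild technicality is to confirm that the integrand $u \mapsto \phi(V_u[X])$ is measurable and the integral is well-defined. This is immediate: by \eqref{eq:valueAtRisk}, $u \mapsto V_u[X] = S_X^{-1}(u)$ is monotone in $u$ (non-increasing as a quantile-type inverse), hence Borel measurable, and the composition with the continuous function $\phi$ preserves measurability. Integrability over $(0,\alpha)$ is then no worse than that assumed implicitly in the very definition of $C_{\alpha}[\phi(X)]$, so no additional hypotheses are needed. The proof is thus essentially a two-step unpacking, with Theorem \ref{prop: monotone} doing all the real work.
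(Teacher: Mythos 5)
Your proposal is correct and follows exactly the paper's own argument: unpack the definition of $C_{\alpha}$ applied to $\phi(X)$ and invoke the monotone transform property of value-at-risk (Theorem \ref{prop: monotone}) at each level $u$ inside the integral. The added remark on measurability of the integrand is a harmless extra precaution but does not change the route.
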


\begin{proof} 

From the definition of CVaR \eqref{eq: CVaR}:
\begin{align*}
C_{\alpha}[\phi(X)] &= \frac{1}{\alpha} \int_0^{\alpha} S^{-1}_{\phi(X)}(u) du \\[\smallskipamount]
&= \frac{1}{\alpha} \int_0^{\alpha} \phi(S^{-1}_X(u)) du \\[\smallskipamount]
&= \frac{1}{\alpha} \int_0^{\alpha} \phi(V_u(X)) du
\end{align*}
\noindent where the second equality holds because of equation \eqref{eq:monotoneTransformProperty} of Proposition \ref{prop: monotone} for VaR. 

\hspace{10cm} $\square$
\end{proof}
\smallskip

In order to prove a monotone transform property of conditional value-at-risk, we need the following well-known inequality, included here for the sake of completeness:

\begin{theorem}[Jensen's inequality]
Let $(\Omega, \mathcal{F}, P)$ be a probability space. Let $g : \Omega \rightarrow \mathbb{R}$ be a $P$-integrable function. Also, assume that $\varphi : \mathbb{R} \rightarrow \mathbb{R}$ is a convex function. Then, 

\[
\varphi(\int_{\Omega} g(\omega) dP(\omega)) \leq \int_{\Omega} \varphi(g(\omega)) dP(\omega).
\] 

\end{theorem}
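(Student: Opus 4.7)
The plan is to use the supporting line characterization of convex functions. First I would set $\mu = \int_\Omega g(\omega)\, dP(\omega)$, which exists and is finite since $g$ is $P$-integrable. The key analytic input is that every convex function $\varphi : \mathbb{R} \to \mathbb{R}$ admits, at every point $x_0$, a supporting affine minorant: there exists a slope $a \in \mathbb{R}$ (any element of the subdifferential at $x_0$) such that
\[
\varphi(x) \geq \varphi(x_0) + a(x - x_0) \FORALL x \in \mathbb{R}.
\]
This follows from the fact that a convex function on $\mathbb{R}$ has left and right derivatives $\varphi'_-(x_0) \leq \varphi'_+(x_0)$ everywhere, and any $a$ in the interval $[\varphi'_-(x_0), \varphi'_+(x_0)]$ produces the required affine lower bound.

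Applying this at $x_0 = \mu$ and evaluating at $x = g(\omega)$ yields the pointwise estimate
\[
\varphi(g(\omega)) \geq \varphi(\mu) + a\bigl(g(\omega) - \mu\bigr) \FORALL \omega \in \Omega.
\]
I would then integrate both sides against $P$. Because $P$ is a probability measure and the right-hand side is an affine function of $g$, linearity of the integral together with $\int_\Omega dP = 1$ gives
\[
\int_\Omega \varphi(g(\omega))\, dP(\omega) \geq \varphi(\mu) + a\left(\int_\Omega g(\omega)\, dP(\omega) - \mu\right) = \varphi(\mu),
\]
which is exactly the claimed inequality.

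The main obstacle is really just the justification of the supporting line; once that is granted, the rest is bookkeeping. A secondary subtlety concerns the meaning of $\int_\Omega \varphi(g(\omega))\, dP(\omega)$: since $\varphi \circ g$ is not assumed integrable, this integral could in principle be $+\infty$ or undefined. However, the pointwise lower bound above is an integrable affine function of $g$, so $(\varphi \circ g)^{-}$ is dominated by an integrable function and the integral is well defined in $\mathbb{R} \cup \{+\infty\}$; in either case the inequality holds as stated.
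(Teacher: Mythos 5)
Your proof is correct. Note, however, that the paper does not actually prove this statement at all: it records Jensen's inequality as a well-known fact, included only for completeness, and immediately moves on to the averaged form used in the monotone-transform result for CVaR. Your supporting-line argument (subgradient at $\mu = \int_\Omega g\, dP$, pointwise affine minorant, then integration against the probability measure) is the standard textbook proof, and your remark that the negative part of $\varphi \circ g$ is dominated by an integrable affine function, so the right-hand integral is well defined in $\mathbb{R} \cup \{+\infty\}$, is a genuine point of care that the paper's bare statement glosses over.
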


From Jensen's inequality, we find that for $f: [a,b] \rightarrow \mathbb{R}$, $\varphi : \mb{R} \rightarrow \mb{R}$ convex, we have

\[
\varphi \big( \frac{1}{b-a} \int_a^b f(x) dx \big) \leq \frac{1}{b-a} \int_a^b \varphi(f(x))dx.
\]

By using this, we can prove the following monotone transform property of CVaR:

\begin{theorem}[Monotone transform of CVaR]
Assume that $\phi : \mb{R} \rightarrow \mb{R}$ is a strictly increasing, continuous and convex function. Then,
\begin{equation}
\label{eq: Jensen}
\phi(C_{\alpha}[X]) \leq C_{\alpha}[\phi(X)].
\end{equation}
\end{theorem}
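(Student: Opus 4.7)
The plan is to combine two ingredients that are already on the table: the integral representation of $C_\alpha$ itself (its definition, equation \eqref{eq: CVaR}) and the integral form of Jensen's inequality stated immediately before the theorem. The monotone-transform identity proved in the previous theorem is not strictly necessary for this inequality, but it will be useful to have it in mind for interpreting the right-hand side.

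First I would start from the left-hand side and expand using the definition of CVaR:
\[
\phi(C_\alpha[X]) = \phi\!\left(\frac{1}{\alpha}\int_0^\alpha V_u(X)\,du\right).
\]
Next I would apply the integral version of Jensen's inequality, with $f(u)=V_u(X)$, $a=0$, $b=\alpha$, and convex map $\varphi=\phi$. This is the displayed consequence of Jensen stated just above the theorem. The convexity hypothesis on $\phi$ is exactly what is needed, and since $\phi$ is continuous (and $V_u(X)$ is a nondecreasing function of $u$, in particular measurable) the integrals make sense provided $V_u(X)$ is integrable on $[0,\alpha]$, which is the standing assumption throughout the CVaR discussion. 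The inequality then gives
\[
\phi\!\left(\frac{1}{\alpha}\int_0^\alpha V_u(X)\,du\right) \leq \frac{1}{\alpha}\int_0^\alpha \phi(V_u(X))\,du.
\]

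Finally I would recognize the right-hand side as $C_\alpha[\phi(X)]$ by the previous theorem \eqref{eq:monotoneTransformProperty_CVaR}, and chain the two relations to conclude $\phi(C_\alpha[X]) \leq C_\alpha[\phi(X)]$, which is \eqref{eq: Jensen}.

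There is really no main obstacle here; the statement is a direct corollary of Jensen's inequality once CVaR is written in its integral form. The only thing one should be careful about is the tacit integrability assumption, i.e.\ that $V_u(X)$ is integrable on $[0,\alpha]$ so that both sides of the asserted inequality are well defined. This is implicit in using the definition \eqref{eq: CVaR} at all, and can be mentioned in a short remark rather than belabored in the proof.
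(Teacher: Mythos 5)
Your proof is correct and follows essentially the same route as the paper: write $C_{\alpha}[X]$ in its integral form, apply the integral version of Jensen's inequality to the convex map $\phi$, and identify $\frac{1}{\alpha}\int_0^{\alpha}\phi(V_u(X))\,du$ as $C_{\alpha}[\phi(X)]$ via the monotone transform property. The only cosmetic difference is that the paper invokes the VaR-level identity $\phi(S^{-1}_X(u)) = S^{-1}_{\phi(X)}(u)$ directly in the last step rather than citing the CVaR-level monotone transform theorem, but these amount to the same fact.
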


\begin{proof}
\[
\begin{array}{llll}
\phi(C_{\alpha}[X]) &=& \phi(\frac{1}{\alpha} \int_0^{\alpha} S^{-1}_X(u)) \\[\medskipamount]
&\leq& \frac{1}{\alpha} \int_0^{\alpha} \phi(S^{-1}_X(u)) du \\[\medskipamount]
&=& \frac{1}{\alpha} \int_0^{\alpha}  S^{-1}_{ \phi(X)}(u) du \\[\medskipamount]
&=& C_{\alpha}(\phi(X)).
\end{array}
\]
Here, the inequality holds from Jensen's inequality. The second to last equality follows because of equation \eqref{eq:monotoneTransformProperty} of the monotone transform proposition for VaR.

\hspace{10cm} $\square$
\end{proof}
\smallskip

Conditional value-at-risk is linear, as shown in the following result:

\begin{corollary}[Linearity of CVaR]
\label{cor: cvar_linear}
For $a > 0$ and $b \in \mathbb{R}$ we have:
$$
C_{\alpha}[a X + b] = a C_{\alpha}[X] + b.
$$ 
\end{corollary}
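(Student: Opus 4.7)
The plan is to deduce linearity of CVaR directly from its integral definition \eqref{eq: CVaR} together with the linearity of VaR already established as a corollary of \thmref{prop: monotone}. The function $\phi(x) = ax + b$ with $a>0$ is strictly increasing and continuous, so it is an admissible transform for every result about VaR proved above.

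First I would apply the monotone transform theorem for CVaR, equation \eqref{eq:monotoneTransformProperty_CVaR}, to the particular transform $\phi(x) = ax + b$. This gives
\[
C_{\alpha}[aX + b] = \frac{1}{\alpha}\int_0^{\alpha} \phi(V_u(X))\,du = \frac{1}{\alpha}\int_0^{\alpha} \bigl(a V_u(X) + b\bigr)\,du.
\]
Next I would split the integral by linearity, pulling the constants $a$ and $b$ out:
\[
\frac{1}{\alpha}\int_0^{\alpha} \bigl(a V_u(X) + b\bigr)\,du
= a\cdot\frac{1}{\alpha}\int_0^{\alpha} V_u(X)\,du + b\cdot\frac{1}{\alpha}\int_0^{\alpha}\,du.
\]
The first term is exactly $a\,C_{\alpha}[X]$ by definition \eqref{eq: CVaR}, and the second term is $b$, yielding the claimed identity. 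Alternatively, one can avoid invoking \eqref{eq:monotoneTransformProperty_CVaR} and instead substitute the VaR linearity $V_u[aX+b] = a V_u[X] + b$ directly under the integral sign in \eqref{eq: CVaR}; the two approaches are equivalent.

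There is essentially no obstacle here: the result is a direct consequence of linearity of the Lebesgue integral and linearity of VaR, with no measurability or integrability issues since $u \mapsto V_u(X)$ is already implicitly assumed integrable on $(0,\alpha)$ for $C_{\alpha}[X]$ to be well-defined, and the same holds for $aV_u(X)+b$ for any finite $a,b$. The only point worth flagging is that the hypothesis $a > 0$ is needed so that $\phi$ is increasing (so that $V_u$ commutes with $\phi$ via \thmref{prop: monotone}); without this, the quantile transformation identity used in the corollary of \thmref{prop: monotone} would fail.
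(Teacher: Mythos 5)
Your proof is correct and essentially matches the paper's: the paper simply writes $C_{\alpha}(aX+b) = \frac{1}{\alpha}\int_0^{\alpha} V_u(aX+b)\,du$, substitutes $V_u(aX+b) = aV_u(X)+b$ under the integral, and splits the integral, which is exactly the alternative route you flag at the end. Your primary route via the CVaR monotone transform identity \eqref{eq:monotoneTransformProperty_CVaR} with $\phi(x)=ax+b$ is the same computation in different packaging, since that identity is itself just the VaR monotone transform applied under the integral sign.
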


\bigskip

\begin{proof}
By using the definition of CVaR and the linearity of VaR, we see that
\[
\begin{array}{llll}
C_{\alpha}(aX + b) &=& \frac{1}{\alpha} \int_0^{\alpha} V_u(aX + b)du \\[\medskipamount]
&=& \frac{1}{\alpha} \int_0^{\alpha} \{ a V_u(X) + b\}du \\[\smallskipamount]
&=& a (\frac{1}{\alpha} \int_0^{\alpha} V_u(X)du) + b \\[\smallskipamount]
&=& aC_{\alpha}(X) + b.
\end{array}
\]

\hspace{10cm} $\square$
\end{proof}

\section{Conditional value-at-risk and optimal design}
\label{sec: cvar_design}

Parallel to the VaR-case, we would like to choose an optimal design $\bm{x}$ such that the conditional value at risk of the total cost is minimized:

\[
\min_{\bm{x} \in \mc{X}} C_{\alpha}(H(\bm{V}, \bm{x}))
\]
\noindent where, as before, $H(\bm{V}, \bm{x}) = K \cdot \I[g(\bm{V}, \bm{x}) > 0] + \kappa(\bm{x})$. From the linearity of CVaR (see Corollary \ref{cor: cvar_linear}),

\[
 C_{\alpha}(H) = K \cdot C_{\alpha}( \I[g(\bm{V}, \bm{x}) > 0]) + \kappa(\bm{x}).
\]

Note that $V_{u}$ is decreasing in $u$ from its definition. Also, note that

% Eksempel tatt vekk etter kommentarer fra Arne jan. 2020:
%\begin{example} 
%We check this for the previously computed VaR: 

%\smallskip

%Assume $u_1 < u_2$. If $P(g(\bm{V}, \bm{x}) > 0) > u_2$, then
%
%\[
%P(g(\bm{V}, \bm{x}) > 0) > u_2 > u_1.
%\]

%Hence, from the VaR calculated right before equation \eqref{eq: box}, we see that $V_{u_2}=K$ implies that $V_{u_1}=K$, but not the other way around. Hence, 
%
%\[
%V_{u_2} \leq V_{u_1}.
%\]
%This confirms that $V_u$ is decreasing in $u$.

%\hspace{10cm} $\square$
%\end{example}

\begin{equation}
\label{eq: ineq}
\begin{array}{llll}
C_{\alpha}( \I[g(\bm{V}, \bm{x}) > 0]) \\[\smallskipamount]
\hspace{2cm} = \frac{1}{\alpha} \int_0^{\alpha} V_u( \I[g(\bm{V}, \bm{x}) > 0])du \\[\medskipamount]
\hspace{2cm} = \frac{1}{\alpha} \int_0^{\min\{P(g(\bm{V}, \bm{x}) > 0), \alpha \}} 1 du \\[\medskipamount]
\hspace{2cm} =  \frac{1}{\alpha} \min\{P(g(V,x) > 0), \alpha\} \\[\medskipamount]
\hspace{2cm} \geq V_{\alpha} ( \I[g(\bm{V}, \bm{x}) > 0]).
\end{array}
\end{equation}
Here, the second equality follows from \eqref{eq: box}-\eqref{eq: star}. The inequality follows from the formula for $V_{\alpha}( \I[g(\bm{V}, \bm{x}) > 0])$ in equation \eqref{eq: box}. Also, if $P(g(\bm{V}, \bm{x}) > 0) > \alpha$, we see that
\begin{equation}
\label{eq: equality}
\min\{P( g(\bm{V}, \bm{x}) > 0), \alpha \}=\alpha.
\end{equation}
\noindent Hence $C_{\alpha} = 1$  (the same as $V_{\alpha}$ in this case). The property in \eqref{eq: ineq} is also true in general: Conditional value-at-risk, $C_{\alpha}$, is more conservative than value-at-risk, $V_{\alpha}$. 

Now, consider two cases: Let case $1$ be the case where 
\[
P[g(\bm{V}, \bm{x}) > 0] > \alpha,
\]
\noindent and case $2$ be the case where 
\[
P[g(\bm{V}, \bm{x}) > 0] \leq \alpha.
\]
The calculations leading to \eqref{eq: ineq}-\eqref{eq: equality} imply that
\begin{equation}
\label{eq: final}
C_{\alpha}(H) = \begin{cases}
K + \kappa(\bm{x}) \mbox{ in case } 1 \\
K \frac{P[g(\bm{V}, \bm{x}) > 0] }{\alpha} + \kappa(\bm{x}) \mbox{ in case } 2. 
\end{cases}
\end{equation}
Note that $0 \leq \frac{P[g(\bm{V}, \bm{x}) > 0] }{\alpha} \leq 1$ in case 2 above (since $P[g(\bm{V}, \bm{x}) > 0] \leq \alpha$). Also, note that our assumption that $\kappa(\bm{x}) < K$ for all $\bm{x} \in \mathcal{X}$, is no longer enough to guarantee that the optimal design should be chosen such that $P[g(\bm{V}, \bm{x}) > 0] \leq \alpha$. 

By considering the difference between the two cases in equation \eqref{eq: final}, we find that a sufficient condition to ensure that the optimal design satisfies $P[g(\bm{V}, \bm{x}) > 0] \leq \alpha$ is:
\begin{equation}
\label{eq: sufficient}
\kappa(\bm{x}_2) - \kappa(\bm{x}_1) \leq \frac{K}{\alpha}(\alpha - P[g(\bm{V}, \bm{x}_2) > 0])
\end{equation}

\noindent for all $\bm{x}_2$ such that $P[g(\bm{V}, \bm{x}_2) > 0] \leq \alpha$ and $\bm{x}_1$ (that is, case $2$) such that $P[g(\bm{V}, \bm{x}_1) > 0] > \alpha$ (i.e., case $1$). Note that this slightly resembles a Lipschitz condition for the cost function $\kappa(\cdot)$. 

Assume, like before, that we have determined a function $C(\bm{u})$ defined for all unit vectors $\bm{u} \in \mathbb{R}^n$ such that \eqref{eq:Cfunction} holds. Now, define a function, $\bar{C}(\bm{u})$, as follows
\begin{equation}
\label{eq:bufCfunction}
\bar{C}(\bm{u}) := \E[\bm{u}' \bm{V} | \bm{u}' \bm{V} > C(\bm{u})].
\end{equation}

Furthermore, introduce the following notation:
\begin{align*}
\bar{\Pi}(\bm{u}) &= \{\bm{V} \in \mathcal{V} : \bm{u}' \bm{V} = \bar{C}(\bm{u})\}, \\[2mm]
\bar{\Pi}^+(\bm{u}) &= \{\bm{V} \in \mathcal{V} : \bm{u}' \bm{V} > \bar{C}(\bm{u})\}, \\[2mm]
\bar{\Pi}^-(\bm{u}) &= \{\bm{V} \in \mathcal{V} : \bm{u}' \bm{V} \leq \bar{C}(\bm{u})\}
\end{align*}

and define 
\begin{equation}
\label{eq: gamma}
\Gamma(\bm{u}, \bm{V}):= \bm{u}\cdot \bm{V} - \bar{C}(\bm{u}). 
\end{equation}

\begin{remark}[Connection to buffered contours]
Note that this is the same framework as what is used in connection to buffered environmental contours, see \cite{DahlHuseby}. The function $\bar{C}$ corresponds to the superquantile function used to construct buffered environmental contours, see \eqref{eq:Cfunction}.
\end{remark}

%Consider \textcolor{blue}{all performance functions $g(\bm{V}, \bm{x})$} such that there exists a $\bm{u}$ where $g(\cdot, \bm{x})$ is \textcolor{blue}{dominated by $\Gamma(\bm{u}, \cdot)$}. 

For a fixed (but arbitrary) unit vector $\bm{u}$, let $\bar{\mathcal{X}}(\bm{u})$ denote the set of designs $\bm{x}$ such that $g(\cdot, \bm{x})$ dominated by $\Gamma(\bm{u}, \cdot)$. Then, for any $\bm{x} \in \bar{\mathcal{X}}(\bm{u})$,

\[
\begin{array}{llll}
P(g(\bm{V}, \bm{x}) > 0) &\leq& P(\Gamma(\bm{u}, \bm{x}) > 0) \\[\smallskipamount]
&=& P(\bm{u} \cdot \bm{V} - \bar{C}(\bm{u}) > 0) \\[\smallskipamount]
&=& P(\bm{u} \cdot \bm{V}>\bar{C}(\bm{u}))  \\[\smallskipamount]
&=& P(\bar{\Pi}^+(\bm{u})) \\[\smallskipamount]
&\leq& P(\Pi^+(\bm{u})) \\[\smallskipamount] 
&=& \alpha.
\end{array}
\]
\noindent where the last inequality follows because $\bar{C}(\bm{u}) > C(\bm{u})$, so by the definitions of $\bar{\Pi}^+(\bm{u})$ and $\Pi^+(\bm{u})$, we find that $\bar{\Pi}^+(\bm{u}) \subseteq \Pi^+(\bm{u})$. Hence,

\[
P(\bar{\Pi}^+(\bm{u})) \leq P(\Pi^+(\bm{u})).
\]

Therefore, we have proved that if $\bm{x} \in \bar{\mathcal{X}}(\bm{u})$, then 
\begin{equation}
\label{eq:risk_min_const}
P(g(\bm{V}, \bm{x}) > 0) \leq \alpha.
\end{equation}

We summarize this in the following theorem.

\begin{theorem}[Domination condition CVaR]
\label{thm:domination_condition}
A sufficient condition for \eqref{eq:risk_min_const} to hold is that $g(\cdot, \bm{x})$ is dominated by a function $\Gamma(\bm{u}, \cdot)$ of the form \eqref{eq: gamma}, where $\bm{u} \in \mathbb{R}^n$ is a suitably chosen unit vector.
\end{theorem}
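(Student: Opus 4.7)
The plan is to unpack what ``dominated by'' means and then chain together three elementary inequalities, exactly in the spirit of the derivation carried out in the paragraphs preceding the theorem. Write the domination hypothesis as the pointwise inequality $g(\bm{V}, \bm{x}) \leq \Gamma(\bm{u}, \bm{V})$ for every $\bm{V} \in \mathcal{V}$. Then for the event of interest we have the inclusion
\[
\{\bm{V} : g(\bm{V}, \bm{x}) > 0\} \subseteq \{\bm{V} : \Gamma(\bm{u}, \bm{V}) > 0\},
\]
so by monotonicity of probability, $P(g(\bm{V}, \bm{x}) > 0) \leq P(\Gamma(\bm{u}, \bm{V}) > 0)$.

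Next I would rewrite the right-hand event using the definition of $\Gamma$ in \eqref{eq: gamma}, getting $\{\bm{V} : \bm{u}'\bm{V} > \bar{C}(\bm{u})\} = \bar{\Pi}^+(\bm{u})$. So the task reduces to bounding $P(\bar{\Pi}^+(\bm{u})) \leq \alpha$. Here I would compare $\bar{\Pi}^+(\bm{u})$ with $\Pi^+(\bm{u})$: since $\bar{C}(\bm{u}) = \E[\bm{u}'\bm{V} \mid \bm{u}'\bm{V} > C(\bm{u})]$ is a conditional expectation of $\bm{u}'\bm{V}$ over the event $\{\bm{u}'\bm{V} > C(\bm{u})\}$, we immediately get $\bar{C}(\bm{u}) \geq C(\bm{u})$, and thus the level-set inclusion
\[
\bar{\Pi}^+(\bm{u}) = \{\bm{V} : \bm{u}'\bm{V} > \bar{C}(\bm{u})\} \subseteq \{\bm{V} : \bm{u}'\bm{V} > C(\bm{u})\} = \Pi^+(\bm{u}).
\]
Monotonicity of probability together with \eqref{eq:cfunction_2} then yields $P(\bar{\Pi}^+(\bm{u})) \leq P(\Pi^+(\bm{u})) = \alpha$, which chains back through the previous inequalities to give \eqref{eq:risk_min_const}.

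The only conceptually nontrivial step is the middle one, the claim $\bar{C}(\bm{u}) \geq C(\bm{u})$; everything else is pure set-inclusion and monotonicity of probability. This inequality is essentially built into the construction of $\bar{C}$ as a conditional mean taken over the right tail past $C(\bm{u})$, so strictly speaking it holds provided that conditional event has positive probability (which it does, being equal to $\alpha > 0$). I would make that quick observation explicit and then assemble the chain of inequalities in a single display to conclude.
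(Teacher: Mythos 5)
Your proposal is correct and follows essentially the same route as the paper's own argument, which likewise chains $P(g(\bm{V},\bm{x})>0)\leq P(\bar{\Pi}^+(\bm{u}))\leq P(\Pi^+(\bm{u}))=\alpha$ using the inclusion $\bar{\Pi}^+(\bm{u})\subseteq\Pi^+(\bm{u})$ coming from $\bar{C}(\bm{u})\geq C(\bm{u})$. Your explicit justification that $\bar{C}(\bm{u})\geq C(\bm{u})$ because it is a conditional mean over the positive-probability tail event is a small but welcome addition that the paper merely asserts.
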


If condition \eqref{eq: sufficient} is satisfied, we know that the optimal design should be chosen such that equation \eqref{eq:risk_min_const} holds. Let $\bm{u} \in \mathbb{R}^n$ be a suitably chosen unit vector. By the domination condition for CVaR, Theorem \ref{thm:domination_condition}, we know that the condition \eqref{eq:risk_min_const} is satisfied for all designs $\bm{x} \in \bar{\mathcal{X}}(\bm{u})$. Hence, an optimal design is found by minimising 
\[
K \frac{P[g(\bm{V}, \bm{x}) > 0] }{\alpha} +\kappa(\bm{x})
\]

with respect to $\bm{x} \in \bar{\mathcal{X}}(\bm{u})$.

\section{Choosing the unit vector $u$}
\label{sec: u}

Different choices of the unit vector $\bm{u}$ will generate different optimal designs. The choice of $\bm{u}$ may often be a result of initial concept decisions related to the system. If a firm has $N$ different initial concepts, $\bm{u}_1, \ldots, \bm{u}_N$ under consideration, the minimization problem can be solved for each of these $\bm{u}_i$'s, $i=1, \ldots, N$. This results in $N$ potentially optimal designs $\bm{x}_1, \ldots, \bm{x}_N$.

To find the optimal concept, the firm can compare the objective function values, i.e. $V_{\alpha}(H(\bm{V}, \bm{x_i}))$ or $C_{\alpha}(H(\bm{V}, \bm{x_i}))$, $i=1, \ldots, N$, of these designs. Assume that for a fixed design $\bm{x}$, we know that the corresponding performance function $g(\cdot, \bm{x})$ is monotone in some $V_i$-component, $i=1, \ldots, n$. Then one should choose the unit vector $\bm{u}$ such that it "follows the monotonicity". That is, if $g$ is non-decreasing in $V_i$, so $V_i \leq \bar{V}_i$ implies that $g((V_i, \bm{V}), \bm{x}) \leq g((\bar{V}_i, \bm{V}),\bm{x})$, then $\bm{u}$ should be chosen such that $u_i \in (0,1)$\footnote{The notation $(V_i, \bm{V})$ means the vecor $\bm{V}$ where component $i$ is $V_i$}.

If $g$ is non-increasing in $V_i$, so $V_i \leq \bar{V}_i$ implies that $g((V_i, \bm{V}), \bm{x}) \geq g((\bar{V}_i, \bm{V}), \bm{x})$, then $u$ should be chosen such that $u_i \in (-1,0)$. 

We make the previous statement more precise: Consider the VaR case. The CVaR case is parallel. Assume that there exists $\bm{V}, V_i \leq \bar{V}_i$ where the system fails in $(\bar{V}_i, \bm{V})$, but functions in $(V_i, \bm{V})$. Note that this assumption is slightly stricter than $g$ being monotone in component $i$. It corresponds to monotonicity as well as criticality of the $i$'th environmental component. Also, assume for contradiction that $u_i \in (-1,0)$. 

By assumption, 
$$
g\big((V_i, \bm{V}), \bm{x}\big) \leq 0 \mbox{ and } g\big((\bar{V}_i, \bm{V}), \bm{x}\big) > 0.
$$
\noindent That is, the system fails in $(\bar{V}_i, \bm{V})$, but functions in $(V_i, \bm{V})$. There exists a vector $\bm{u}$ such that the (by scaling) unit vector $(u_i, \bm{u})$ satisfies $(\bar{V}_i, \bm{V}) \in \Pi^-((u_i, \bm{u}))$ and  $(V_i, \bm{V}) \in \Pi^+((u_i, \bm{u}))$. 

From the definitions of $\Pi^+((u_i, \bm{u}))$ and $\Pi^-((u_i, \bm{u}))$, this implies that the system should function in $(\bar{V}_i, \bm{V})$ and fail in $(\bar{V}_i, \bm{V})$. But this contradicts the assumption. Hence, choosing $u_i \in (-1,0)$ leads to a contradiction, so $u_i$ should be chosen in the only other way possible, namely such that $u_i \in (0,1)$. The arguments in the case where $g$ is non-increasing in $V_i$ is parallel.

\section{Conclusions and further work}

So far, we have minimized the risk of the cost of a structural design wrt. VaR and CVaR. 

An alternative design optimization problem is to minimize the expected cost under a risk constraint:

\begin{equation}
\label{eq: alt_prob}
\begin{array}{llll}
&\min E[H(\bm{x}, \bm{V})] \\[\medskipamount]
\mbox{such that} \\[\medskipamount]
&\mbox{risk}(g(\bm{x},\bm{V})) \leq \alpha.
\end{array}
\end{equation}

Here, the risk-function, which depends on the performance function of the system, can be either value-at-risk or conditional value-at-risk.

By looking at this optimization problem, the environmental contour becomes a representation of the constraint. This problem and its connection to environmental contours are the topic of future works.

\bibliography{ESREL2020}
\bibliographystyle{abbrv}

\end{document}